%-----------------------------------------------------------------------
% Beginning of tran-l-template.tex
%-----------------------------------------------------------------------
%
%     This is a topmatter template file for TRAN for use with AMS-LaTeX.
%
%     Templates for various common text, math and figure elements are
%     given following the \end{document} line.
%
%%%%%%%%%%%%%%%%%%%%%%%%%%%%%%%%%%%%%%%%%%%%%%%%%%%%%%%%%%%%%%%%%%%%%%%%

%     Remove any commented or uncommented macros you do not use.

  %\documentclass{amsart}
 \documentclass[11pt,letterpaper]{amsart}

%     If you need symbols beyond the basic set, uncomment this command.
%\usepackage{amssymb}

%     If your article includes graphics, uncomment this command.
%\usepackage{graphicx}

%     If the article includes commutative diagrams, ...
%\usepackage[cmtip,all]{xy}

%     Update the information and uncomment if AMS is not the copyright
%     holder.
%\copyrightinfo{2009}{American Mathematical Society}

%%%%%%%%%%%%%%%%%%%%%
%%%自行添加%%Begin
 
 \usepackage{txfonts} %%%%%%%%字体 $\lambda$

 \usepackage[colorlinks,linkcolor=blue,citecolor=blue]{hyperref}
 
 \theoremstyle{plain}

 %\usepackage[switch]{lineno} \linenumbers
 % \usepackage{lineno} \linenumbers
 
  %\usepackage{bibentry}

%%%%%%%%%%%%%%%%%%%%%
%%%自行添加%%End

%%%%%%%%%%%%%%%%%%%%%

\newtheorem{theorem}{Theorem}[section]
\newtheorem{lemma}[theorem]{Lemma}

\theoremstyle{definition}

 %%%%%%%%%%%%%%%%%%%%%%%%%%

 %\newcommand{\v1}{{\bf 1}}

 %%%%%%%%%%%%%%%%%%%%%%%%%%

 %%%%%%%%%%%%%%%%%%%%%%%%%%%%%%%%%%%%%%%

\theoremstyle{remark}

\numberwithin{equation}{section}

\begin{document}

\title[]
{Notes on conformal metrics of negative curvature on manifolds with boundary }

%    Only \author and \address are required; other information is
%    optional.  Remove any unused author tags.

%    author one information
% \author[short version for running head]{Rirong Yuan}
\author{Rirong Yuan }
\address{School of Mathematics, South China University of Technology, Guangzhou 510641, China}
%\curraddr{}
\email{yuanrr@scut.edu.cn}
%\thanks{Research supported in part by the National Natural Science Foundation of China (Grant No. 11801587).}
%\thanks{The author is supported by the National Natural Science Foundation of China under Grant No. 11801587.}

%    \subjclass is required.
 % \subjclass[2010]{35J15, 53C21, 53A30, 58J05, 45J60.}
%\subjclass[2020]{}
%\keywords{ }    

%\date{ }

\dedicatory{}

\begin{abstract}
	
%Using certain Morse functions, on manifolds with boundary  we construct  conformal metrics with certain restrictions to curvatures.
%such that the eigenvalue vector of modified Schouten tensors %falls into
%belongs to a given cone. 
%In this note 
%We  appeal to 
We use certain Morse functions to construct  conformal metrics 
such that the eigenvalue vector of modified Schouten tensor %falls into
 belongs to a given cone. 	As a  result, we prove that any Riemannian metric on compact 3-manifolds with boundary %is conformally equivalent to
	is conformal   to a compact metric of negative sectional curvature.
	
\end{abstract}

\maketitle

   %\tableofcontents

  %\setcounter{section}{-1}
  
  \section{Introduction}

  %Let $(M,g)$ be a connected   Riemannian manifold of dimension $n\geq 3$  with smooth boundary $\partial M$,  $\bar M=M\cup\partial M$.
%  Throughout this paper we use Levi-Civita connection $\nabla$. 
%Let $\partial M$ denote the boundary of $M$,  $\bar M=M\cup\partial M$.  (Notice $\bar M=M$ if $\partial M=\emptyset$).  
 %Let  $K_g$, ${Ric}_g$ and   ${R}_g$ denote the sectional,  Ricci and scalar curvature of  $g$, respectively, with respect to the  Levi-Civita connection $\nabla$. 
  
  In Riemannian geometry, %it would be important and interesting to
  a basic problem is to find a metric so that the various curvatures satisfy prescribed properties. A well-known result  on this direction is %concerned with
   the existence of metrics with negative Ricci curvature.
  In \cite{Gao1986Yau} Gao-Yau proved that any closed 3-manifold admits a Riemannian metric with negative Ricci curvature. Subsequently, Gao-Yau's theorem was  generalized
   by Lohkamp
  \cite{Lohkamp-1} to higher dimensions. 
   In addition, Lohkamp's result allows  manifolds with boundary.
%  By Gao-Yau-Lohkamp's results, one can prove that every compact manifolds of dimension $n\geq 3$ with smooth boundary admits a compact Riemannian metric of negative Ricci curvature.
  %Based on the relation between sectional curvature and Einstein tensor, 
 %	For the sectional curvature, Gursky-Streets-Warren  \cite{Gursky-Streets-Warren2010}   proved that any Riemannian 3-manifold  with smooth boundary admits a complete conformal metric of ``almost negative"  	curvature.
 	%via 	a Monge-Amp\`ere type equation. 

 	This note is devoted to constructing conformal metrics with certain restrictions to curvatures.
 	%via Morse theory.  	
 	 	Let  $K_g$, ${Ric}_g$ and   ${R}_g$ denote   sectional,  Ricci and scalar curvature of the Riemannian metric $g$, respectively, 
 	with respect to the  Levi-Civita connection. %$\nabla$. 
 	
 	 		Our first result is concerned with sectional curvature  of   3-manifolds. 
 	 		\begin{theorem}
 	 			\label{thm0-main-dimension3}
 	 			Let $(M,g)$ be a three dimensional compact Riemannian manifold with smooth boundary. Then there is a smooth compact conformal metric $g_u=e^{2u}g$ of negative sectional curvature.
 	 			
 	 		\end{theorem}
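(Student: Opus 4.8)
The plan is to reformulate negative sectional curvature as a cone condition on the Schouten tensor and then to build the conformal factor out of a Morse function adapted to the boundary, with local corrections near the critical points.

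\textbf{Reduction to a cone condition.} In dimension three the Weyl tensor vanishes, so the full curvature of a metric is carried by its Schouten tensor $A_g=\Ric_g-\tfrac14R_g\,g$; in an orthonormal frame the sectional curvature of the plane $e_i\wedge e_j$ equals $(A_g)_{ii}+(A_g)_{jj}$. Hence $g_u=e^{2u}g$ has negative sectional curvature if and only if the eigenvalue vector of $A_{g_u}$ lies in the open symmetric cone
\[
\Gamma=\{\lambda=(\lambda_1,\lambda_2,\lambda_3):\ \lambda_i+\lambda_j<0\ \text{for all}\ i\ne j\}
\]
(equivalently: the sum of the two largest eigenvalues of $A_{g_u}$ is negative), and this membership is conformally invariant because $\Gamma$ is a cone. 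Using the conformal transformation law $A_{g_u}=A_g-\nabla^2u+du\otimes du-\tfrac12|\nabla u|^2g$, the problem becomes the construction of $u\in C^\infty(M)$ with $\lambda(A_{g_u})\in\Gamma$ everywhere. One may equally phrase this, as in the general construction, with the modified Schouten tensor $A^{\tau}_g=A_g-\tfrac{\tau-1}{4}R_g\,g$ in place of $A_g$; the parameter $\tau$ (in a small range, say $(1,4/3)$, where $A^{\tau}_g$ is a pointwise scalar shift of $A_g$) buys a little extra flexibility at the critical points.

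\textbf{The ansatz and the regular part.} Since $\partial M\ne\emptyset$, a compact $3$-manifold with boundary carries a Morse function $\varphi$ with no critical point of index $3$ (it has a handle decomposition without top-index handles; one may take $\partial M$ to be the top regular level set). I would then try $u=f(\varphi)+\sum_p\rho_p$, where $f$ is a rapidly increasing convex function — concretely $f(s)=\tfrac1Ne^{Ns}$ with $N$ large, so $f''/f'\equiv N$ — and each $\rho_p$ is a small, direction-adapted correction supported near a critical point $p$ of positive index. Where $|\nabla\varphi|\ge\delta>0$ and the $\rho_p$ are switched off, the term $(f')^2\big(d\varphi\otimes d\varphi-\tfrac12|\nabla\varphi|^2g\big)$ dominates $A_{g_u}$ and gives eigenvalues close to $\tfrac12(f')^2|\nabla\varphi|^2\,(1,-1,-1)$, which sits \emph{on} $\partial\Gamma$. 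The two eigenvalue sums $\lambda_i+\lambda_j$ that are not automatically negative are of size $O(f')$, and a short computation bounds each of them above by $-f'\big(N|\nabla\varphi|^2-C\big)+O(1)$ with $C=C(M,g,\varphi)$; choosing $N$ large therefore forces $\lambda(A_{g_u})$ strictly into $\Gamma$ on the regular part and, since there $A_{g_u}\approx-f'\nabla^2\varphi$ is negative definite, also at the index-$0$ critical points. Thus the strict negativity of the curvature is a genuinely lower-order effect, extracted from the rate of growth of $f$ rather than from its size.

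\textbf{The critical points and conclusion.} At a critical point $p$ of index $k\ge1$ the gradient term degenerates and $A_{g_u}(p)=A_g(p)-f'(\varphi(p))\,\nabla^2\varphi(p)+(\text{terms from }\rho_p)$; the Hessian $\nabla^2\varphi(p)$ has $k$ negative and $3-k$ positive eigenvalues, so $-f'\nabla^2\varphi(p)$ has signature $(+,-,-)$ for $k=1$ (on $\partial\Gamma$) and $(+,+,-)$ for $k=2$ (outside $\Gamma$). I would kill the offending positive directions by taking $\rho_p$ strictly convex exactly along the $k$ unstable directions of $\varphi$ at $p$ — in Morse coordinates $\rho_p\approx\tfrac M2(x_1^2+\dots+x_k^2)$, cut off along those directions at a small scale — so that $-\nabla^2\rho_p$ contributes a large negative amount precisely there; with $M$ large relative to $f'(\varphi(p))$ this drives $\lambda(A_{g_u})$ into the negative cone $\subset\Gamma$ near $p$, the one-dimensional estimate on the cut-off zone closing for the same reason as on the regular part, because there $\nabla^2\rho_p/\nabla\rho_p$ is large. (A judicious choice of $\tau$ together with a prescribed shape of $\nabla^2\varphi$ at the critical points can reduce the number of points needing such a correction.) The delicate point is the order of quantifiers: fix $\varphi$ and the unstable directions, then the cut-off radii, then the amplitudes $M=M(p)$, and finally the global rate $N$, keeping the admissible windows — of the form $f'(\varphi(p))\ll M(p)\ll(f')^2|\nabla\varphi|^2$ on the cut-off zone — nonempty. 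Patching the three regimes yields $u\in C^\infty(M)$ with $\lambda(A_{g_u})\in\Gamma$, i.e. $g_u=e^{2u}g$ is a smooth compact metric of negative sectional curvature.

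\textbf{Where the difficulty sits.} I expect the critical points to be the whole battle, and in particular the reason the hypothesis $\partial M\ne\emptyset$ cannot be dropped: at an index-$3$ critical point (a strict interior maximum) the Schouten tensor of \emph{any} large conformal perturbation is forced to acquire a large positive eigenvalue — consistent with the fact that $S^3$, or any closed simply connected $3$-manifold, admits no conformal metric of negative curvature — and geometrically there are no ``stable'' directions of $\varphi$ along which a local correction could be allowed to decay. The boundary hypothesis is used precisely to discard the top-index critical points from the outset; what remains is the uniform, multi-scale bookkeeping near the index-$1$ and index-$2$ critical points described above.
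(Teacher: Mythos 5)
Your reduction of negative sectional curvature to a cone condition on the (modified) Schouten tensor, and your computation on the regular set of $\varphi$ (where the $f''\,d\varphi\otimes d\varphi$ term rescues the two pair-sums that the leading term $(f')^2(d\varphi\otimes d\varphi-\tfrac12|\nabla\varphi|^2g)$ only makes zero), are both correct and match the mechanism in the paper. But the proposal has a genuine gap, and it sits exactly where you locate "the whole battle": the index-$1$ and index-$2$ critical points. The corrections $\rho_p$ are only sketched. At the transition annulus where $\rho_p$ is cut off, $\nabla^2\rho_p$ necessarily acquires eigenvalues of the \emph{wrong} sign of size comparable to $M$ divided by the square of the cut-off radius, the cross terms $f'\,d\varphi\otimes d\rho_p+d\rho_p\otimes d\varphi$ in $du\otimes du$ are not controlled, and $|\nabla\varphi|^2$ is itself only of the order of the cut-off radius squared there, so the claimed window $f'(\varphi(p))\ll M(p)\ll (f')^2|\nabla\varphi|^2$ is not obviously nonempty; none of this bookkeeping is carried out, and it is precisely the part of the argument that can fail. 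As written, the proof is not complete.

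The missing idea — which makes the entire critical-point analysis evaporate — is that a compact manifold with nonempty boundary admits a smooth function with \emph{no critical points at all}, not merely none of top index. This is the paper's Lemma \ref{lemma-diff-topologuy}: take a Morse function on the double of $M$ and use the homogeneity (isotopy) lemma to push every critical point lying in $\bar M$ off into the complement of $\bar M$. With such a $v$ one has $|\nabla v|^2\geq\delta>0$ on all of $\bar M$, and the single ansatz $u=e^{Nv}$ with $N\gg1$ finishes the proof in a few lines; the paper also works with the Einstein tensor ($\tau=2$, via Lemma \ref{prop1-einstein-sectional}) rather than the Schouten tensor, for which the dominant eigenvalue vector is $(1,1,e^{Nv})$, strictly inside the positive cone — avoiding even the borderline $\partial\Gamma$ issue you have to fight on the regular set. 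Your closing heuristic that the boundary hypothesis is needed "to discard the top-index critical points" therefore misidentifies its role: it is used to discard \emph{all} critical points. (For closed manifolds the paper does need a residual hypothesis — quasi-negativity — and there a correction localized near the critical set, in the spirit of what you propose, really is required; see Theorem \ref{thm0-main2-dimension3}.)
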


 	 The proof is based on Morse theory as well as  the relation between sectional curvature and Einstein tensor in dimension three.  %(Lemma \ref{prop1-einstein-sectional}). 
 	 %In dimension three, the Einstein tensor is closely related to sectional curvature.  
 	  %The following lemma is standard.
 	 
 	 \begin{lemma}
 	 	\label{prop1-einstein-sectional}
 	 	Fix $x\in M^3$,  let $\Sigma\subset T_xM$ be a tangent $2$-plane,   $\vec{\bf n}\in T_xM$ the unit normal vector to $\Sigma$, then  
 	 	\begin{equation} \label{sectional-einstein} \begin{aligned}
 	 			G_g(\vec{\bf n},\vec{\bf n})=-K_g(\Sigma). \nonumber
 	 	\end{aligned}  \end{equation} 
 	 	Here  
 	 	$G_g:=Ric_g-\frac{R_g}{2} g$ stands for the Einstein tensor of $g$.
 	 \end{lemma}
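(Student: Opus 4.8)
The plan is to verify this pointwise identity by a direct computation in an orthonormal frame adapted to the plane $\Sigma$. First I would choose an orthonormal basis $\{e_1,e_2,e_3\}$ of $T_xM$ with $e_3=\vec{\bf n}$, so that $\Sigma=\mathrm{span}\{e_1,e_2\}$; writing $K_{ij}$ for the sectional curvature of the $2$-plane spanned by $e_i,e_j$, we have $K_g(\Sigma)=K_{12}$. The whole statement then reduces to expressing $\Ric_g(e_3,e_3)$ and $R_g$ in terms of the three numbers $K_{12},K_{13},K_{23}$, which is the point special to dimension three.

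Next I would expand, with a fixed sign convention for the Riemann tensor $R_g$,
\[
\Ric_g(\vec{\bf n},\vec{\bf n})=\Ric_g(e_3,e_3)=\sum_{i=1}^{3}\langle R_g(e_3,e_i)e_i,e_3\rangle=K_{13}+K_{23},
\]
the $i=3$ term being zero. Likewise $R_g=\sum_{i=1}^{3}\Ric_g(e_i,e_i)=2(K_{12}+K_{13}+K_{23})$, since each of the three planes contributes to exactly two of the diagonal Ricci terms. Substituting into $G_g=\Ric_g-\tfrac{R_g}{2}g$ then yields
\[
G_g(\vec{\bf n},\vec{\bf n})=(K_{13}+K_{23})-(K_{12}+K_{13}+K_{23})=-K_{12}=-K_g(\Sigma),
\]
which is the asserted identity.

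I do not expect any genuine obstacle here: the assertion is a finite-dimensional linear-algebra identity at a single point, with no analysis or global geometry involved. The only thing to be careful about is choosing the sign convention for $R_g$ consistently with the definitions of $K_g$, $\Ric_g$ and $R_g$ used throughout the paper, so that $K_{ij}$ is genuinely the sectional curvature; with the standard convention the signs combine as above. It is also worth noting that this is precisely the identity that makes negativity of the Einstein tensor $G_g$ equivalent to negativity of all sectional curvatures in dimension three, which is how this lemma feeds into the proof of Theorem~\ref{thm0-main-dimension3}.
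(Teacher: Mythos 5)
Your computation is correct and is the standard orthonormal-frame argument: in dimension three the three sectional curvatures $K_{12},K_{13},K_{23}$ determine $\Ric_g$ and $R_g$, and the signs combine exactly as you write to give $G_g(\vec{\bf n},\vec{\bf n})=-K_{12}$. The paper states this lemma without proof, treating it as a classical fact, so your argument simply supplies the omitted (and essentially unique) verification.
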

  
   	 		Despite the Cartan-Hadamard theorem and the complex of topology of underlining manifolds, in general one could not except that the resulting metric in Theorem \ref{thm0-main-dimension3} is complete.  %As a complement,
   	 		A nice complement is due to Gursky-Streets-Warren  \cite{Gursky-Streets-Warren2010}  who proved that any Riemannian metric on compact 3-manifold  with smooth boundary admits a complete conformal metric of ``almost negative"  sectional	curvature.
  %via 	a Monge-Amp\`ere type equation. 

 	 In fact we prove more general results than Theorem \ref{thm0-main-dimension3}.
	%First we summarize useful notion.	
	Let $\Gamma$ be an \textit{open},  \textit{symmetric}, \textit{convex} cone in $\mathbb{R}^n$ with  vertex at the origin, $\partial \Gamma\neq \emptyset$, and
 	%containing the positive cone, i.e.
 	$$\Gamma_n:=\left\{\lambda=(\lambda_1,\cdots,\lambda_n)\in \mathbb{R}^n: \mbox{ each } \lambda_i>0\right\}\subseteq\Gamma.$$
 	%  and the boundary $\partial \Gamma\neq \emptyset$. 
 	Following \cite{CNS3}   $\Gamma$ is a type 1 cone if $(0,\cdots,0,1)\in\partial\Gamma$; otherwise $\Gamma$ is a type 2 cone. As in \cite{yuan-PUE2-note} for $\Gamma$ we denote $\varrho_\Gamma$  the constant 
 	with
 	\begin{equation}
 		\label{varrho1-Gamma}
 		\begin{aligned}
 			(1,\cdots,1,1-\varrho_\Gamma)\in\partial \Gamma.
 		\end{aligned}
 	\end{equation}  

	For $n\geq3$ we 
 	denote the modified Schouten tensor by   (see \cite{Gursky2003Viaclovsky})
 	% \begin{equation} 	\begin{aligned}	A_{{g}}^{\tau,\alpha}=\frac{\alpha}{n-2} \left({Ric}_{g}-\frac{\tau}{2(n-1)}   {R}_{g}\cdot {g}\right), \,\, \alpha=\pm1,  \mbox{  }\tau \in \mathbb{R}. %\nonumber 	\end{aligned}  \end{equation}
 	\begin{equation}  	\begin{aligned} 		A_{{g}}^{\tau}=\frac{1}{n-2} \left({Ric}_{g}-\frac{\tau}{2(n-1)}   {R}_{g}\cdot {g}\right), \,\, \tau \in \mathbb{R}. \nonumber	\end{aligned} \end{equation}  
 For $\tau=n-1$,   it corresponds to the Einstein tensor.    
 	%	$$G_g=Ric_g-\frac{R_g}{2} g.$$
 	When $\tau=1$, it is the Schouten tensor %$A_g$.
 	$$A_g=\frac{1}{n-2} \left({Ric}_g-\frac{1}{2(n-1)}{R_g}\cdot g\right).$$
 	For simplicity, we denote $A_{{g}}^{\tau,\alpha}=\alpha A_{{g}}^{\tau}$ with $\alpha=\pm1$.
Also  we  denote the boundary of $M$ by  $\partial M$. Let $\bar M=M\cup\partial M$. 
% (Note $\bar M=M$ if $\partial M=\emptyset$).
 	%When $\alpha=1$, it is the modified Schouten tensor  $A_g^\tau=\frac{1}{n-2} ({Ric}_g-\frac{\tau}{2(n-1)}{R_g}\cdot g ).$

 	%This is significantly interesting in dimension three, 
 	%as noted in  \cite[Section 2]{Gursky-Streets-Warren2010}, 
 	%since the Einstein tensor is closely connected with sectional curvature:

In this note we consider the problem of finding   a conformal metric $g_u$ subject to
 \begin{equation}
 	\label{def1-admissible-metric}
 	\begin{aligned}
 		\lambda( g^{-1} A_{g_u}^{\tau,\alpha}) \in\Gamma.  \nonumber
 	\end{aligned}
 \end{equation}
The existence of such metrics %subject to \eqref{def1-admissible-metric} 
is imposed as  a key assumption to  study conformal deformation for  Schouten tensor and more general modified Schouten tensors; see e.g. \cite{Guan2008IMRN,Gursky2003Viaclovsky,Li2011Sheng}. 
%See \cite{Gursky-Streets-Warren2011,yuan-PUE1,yuan-PUE2-note} for construction of such admissible metrics.  %%%%%%%%%%%%%%%%
In
prequels \cite{yuan-PUE1,yuan-PUE2-note} the author constructed
 the conformal metric with $\lambda( -g^{-1} A_{g_u}^{\tau}) \in\Gamma$ for $\tau<1$ and $\tau\leq 2-\frac{2}{\varrho_\Gamma}$, and then extended some of results in Lohkamp \cite{Lohkamp-1,Lohkamp-2} 
  %and Gursky-Streets-Warren \cite{Gursky-Streets-Warren2011} 
  using fully nonlinear elliptic equations.
 The case $\tau>1$ is  different.
When $\tau\geq 2$ the  metrics with $\lambda( g^{-1} A_{g_u}^{\tau}) \in\Gamma$ were also constructed in the same papers %\cite{yuan-PUE1,yuan-PUE2-note} 
 %in which the construction relies crucially on  the extra assumption
 %the additional assumption
%Specifically,
 under the extra assumption
\begin{equation}
	\label{assumption-4-2}
	\begin{aligned}
		(1,\cdots,1,1-	\frac{n-2}{\tau-1} )\in \Gamma,
	\end{aligned}
\end{equation} 
i.e., $\tau>1+(n-2)\varrho_{\Gamma}^{-1}$.
%
 %%%%%%%%%%%%%%%%%%%%%%%%%%%%%%%%%%%%%%%%%%%%%%%%%%%%%%%%%%%%
%%%%%%%%%%%%%%%%%%%%%%%%%%%%%%%%%%%%%%%%%%%%%%%%%%%%%%%%%%
Nevertheless,  \eqref{assumption-4-2} does not allow $\tau=n-1$ when  $\Gamma=\Gamma_n$.
%hence cannot cover 
 This assumption is dropped in the following theorem. 
    %Using some of results concerning Morse function, we prove that
  \begin{theorem}	\label{thm2-main}
  	Let $(M,g)$ be a compact Riemannian manifold of dimension $n\geq 3$ and with smooth boundary. 
  	%Suppose $\tau\geq2.$
  	%\begin{equation}	\begin{aligned}		\tau\geq2.  \nonumber	\end{aligned} 	\end{equation}
  	Then for any $\tau\geq2$ there is a smooth compact  metric $g_u=e^{2u}g$ for some $u\in C^\infty(\bar M)$ with 
  	$A^{\tau}_{g_u}>0 \mbox{ in } \bar M.$
   %	\begin{equation}	\begin{aligned} A^{\tau}_{g_u}>0 \mbox{ in } \bar M. \nonumber\end{aligned}  	\end{equation}
  \end{theorem}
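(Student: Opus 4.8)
The idea is to construct $u$ explicitly, combining the conformal change law with an auxiliary function having no critical points. Writing $g_u=e^{2u}g$, one has, with $\nabla$, $\Delta$ and norms taken with respect to $g$ (cf.\ \cite{Gursky2003Viaclovsky}),
\[
A_{g_u}^{\tau}=A_{g}^{\tau}-\nabla^2 u+du\otimes du+\frac{\tau-1}{n-2}\,(\Delta u)\,g+\Big(\frac{\tau}{2}-1\Big)|\nabla u|^2\,g .
\]
For $\tau\ge 2$ the two scalar coefficients satisfy $\tfrac{\tau-1}{n-2}>0$ and $\tfrac{\tau}{2}-1\ge 0$; this sign information is the only way the hypothesis $\tau\ge 2$ enters, and having $\tfrac{\tau}{2}-1\ge 0$ rather than possibly negative is precisely what lets one dispense with the assumption \eqref{assumption-4-2} of the prequels. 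Next, since every connected component of the compact manifold $\bar M$ has nonempty boundary, there is $f\in C^\infty(\bar M)$ with $\nabla f\neq 0$ at every point of $\bar M$; this is classical (for instance, one obtains such an $f$ from an arbitrary Morse function by cancelling, and then pushing, all critical points into a collar of $\partial M$), and it is here that $\partial M\neq\emptyset$ is indispensable: on a closed manifold no such $f$ exists, and the conclusion itself is false there (e.g.\ on $S^n$ any conformal metric has positive scalar curvature somewhere, while $A^\tau_{g_u}>0$ forces $R_{g_u}<0$). After an affine normalization assume $1\le f\le 2$ and take $u=e^{Kf}$ for a large constant $K>0$, so that $g_u=e^{2u}g$ is automatically a smooth metric on the compact manifold $\bar M$.

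Set $\psi(s)=e^{Ks}$, so $\psi'=Ke^{Kf}>0$, $\psi''=K\psi'>0$, and $\nabla u=\psi'\nabla f$, $\nabla^2 u=\psi''\,df\otimes df+\psi'\nabla^2 f$, $\Delta u=\psi''|\nabla f|^2+\psi'\Delta f$. Writing $a=\tfrac{\tau-1}{n-2}>0$ and $b'=\tfrac{\tau}{2}-1\ge 0$, substitution gives
\[
A_{g_u}^{\tau}=A_{g}^{\tau}+\psi''\Big(a|\nabla f|^2 g-df\otimes df\Big)+\psi'\Big(a(\Delta f)\,g-\nabla^2 f\Big)+(\psi')^2\Big(df\otimes df+b'|\nabla f|^2 g\Big).
\]
To check $A_{g_u}^\tau>0$ at $x\in\bar M$, decompose $X\in T_x\bar M$ as $X=\alpha e_1+Y$ with $e_1=\nabla f/|\nabla f|$, $Y\perp e_1$, so $|X|^2=\alpha^2+|Y|^2$ and $df(X)=\alpha|\nabla f|$. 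The term carrying $\psi''$ and the term carrying $(\psi')^2$ are diagonal in this splitting, contributing $\psi''|\nabla f|^2\big((a-1)\alpha^2+a|Y|^2\big)$ and $(\psi')^2|\nabla f|^2\big((1+b')\alpha^2+b'|Y|^2\big)$ to $A_{g_u}^\tau(X,X)$, while the remainder $A_g^\tau(X,X)+\psi'\big(a(\Delta f)|X|^2-\nabla^2 f(X,X)\big)$ is $\ge -C(1+\psi')|X|^2$ for a constant $C=C(\bar M,g,f)$ (an operator-norm bound, which therefore also absorbs all off-diagonal contributions). Discarding the nonnegative $(\psi')^2 b'|\nabla f|^2|Y|^2$ and bounding $\psi''(a-1)\alpha^2|\nabla f|^2\ge -C\psi''\alpha^2$, one reaches
\[
A_{g_u}^\tau(X,X)\ \ge\ \alpha^2\big[(1+b')c_0(\psi')^2-C\psi''-C(1+\psi')\big]+|Y|^2\big[a\,c_0\,\psi''-C(1+\psi')\big],
\]
where $c_0=\min_{\bar M}|\nabla f|^2>0$.

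It remains to choose $K$. Since $\psi''=K\psi'$ and $\psi'=Ke^{Kf}\ge Ke^{K}$, the bracket multiplying $|Y|^2$ equals $\psi'(aKc_0-C)-C$, which is positive for $K$ large (using $a>0$); and since moreover $(\psi')^2/\psi''=\psi'/K=e^{Kf}\ge e^{K}\to\infty$, the bracket multiplying $\alpha^2$ equals $\psi'\big((1+b')c_0\psi'-CK-C\big)-C$, again positive for $K$ large (using $1+b'>0$). Hence, for $K$ sufficiently large, $A_{g_u}^\tau(X,X)>0$ for every $x\in\bar M$ and every $X\neq 0$, which is the assertion of the theorem.

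I expect the genuine obstacle to be the input used in the first paragraph, namely the existence of a critical-point-free $f\in C^\infty(\bar M)$: the whole construction rests on it, and it is exactly the feature separating manifolds with boundary from closed ones. A secondary technical point worth isolating is that for $\tau=2$ one cannot simply take $u=tf$: in directions tangent to the level sets of $f$ the $t$-linear contribution is $\psi'\big(a(\Delta f)|X|^2-\nabla^2 f(X,X)\big)$, which has no definite sign, so one genuinely needs a strictly convex profile with $\psi''/\psi'\to\infty$ in order to make the favorable term $\psi''\,a|\nabla f|^2|Y|^2$ both present and dominant there; for $\tau>2$, where $b'>0$, the linear choice $u=tf$ already works.
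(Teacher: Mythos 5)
Your proof is correct and follows essentially the same route as the paper: both arguments take $u=e^{Kf}$ for a critical-point-free function $f$ (produced by Morse theory on the double plus the homogeneity lemma) and let the convexity of the exponential profile generate the dominant positive-definite term $\psi''\bigl(a|\nabla f|^2 g-df\otimes df\bigr)+(\psi')^2\bigl(df\otimes df+b'|\nabla f|^2g\bigr)$, using exactly the sign conditions $a>0$, $b'\ge 0$ coming from $\tau\ge 2$. The only cosmetic difference is that the paper records positivity by writing out the eigenvalue vector of the rank-one perturbation of $g$ (its formula \eqref{key3}), whereas you verify it by testing the quadratic form on $X=\alpha e_1+Y$; these are the same computation.
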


%For   $\tau=n-1$, we see
%\begin{equation}	\begin{aligned}	A_{g}^{\tau}=\frac{1}{n-2} G_g. \nonumber	\end{aligned}\end{equation}

 %As a consequence,  we obtain Theorem \ref{thm0-main-dimension3}.
  %together with Lemma \ref{prop1-einstein-sectional}, 
  %we obtain Theorem \ref{thm0-main-dimension3}. 
  % Building on %Morse theory and 
  %some of results in \cite{yuan-PUE1}, 
  %%%%%%%%%%%%%%%%%%%%%%%%%%%%%%%%%%%%%%%%%%%%
%Similar results  to Theorem   \ref{thm2-main} were obtained in prequels \cite{yuan-PUE1,yuan-PUE2-note},
% in which the construction relies crucially on the additional assumption
%Specifically,
 %under the extra assumption
%\begin{equation}	\label{assumption-4-2}	\begin{aligned}	(1,\cdots,1,1-	\frac{n-2}{\tau-1} )\in \Gamma.	\end{aligned} \end{equation} 
 %the author \cite{yuan-PUE1,yuan-PUE2-note} constructed smooth conformal metric $g_u$ %$g_u=e^{2u}g$ 
%satisfying %\eqref{def1-admissible-metric}.
%$\lambda(g^{-1} A_{g_u}^\tau) \in\Gamma$ for $\tau\geq 2$.
%This assumption is dropped in this paper.
%Nevertheless,  \eqref{assumption-4-2} does not allow $\tau=n-1$ when  $\Gamma=\Gamma_n$.
%hence cannot cover 
%This assumption has been dropped in Theorem  \ref{thm2-main}.
%In Theorem \ref{thm0-main-dimension3} we drop this assumption.

%%%%%%%%%%%%%%%%%%%%%%%%%%%%%%%%%%%%%%%%%%%%%%%%%%%%%%%%%%%%%%%%%%%%%%%%%%%%%%%%%%%%%%%%%%%%%%%%%%%%%%%
 
% The case $\tau=1$ is more delicate.  
 %To state the results, we shall  summarize some notions and notations.
 As a consequence,  we obtain Theorem \ref{thm0-main-dimension3}.
 The   case $\tau=1$ is   more delicate. 
 %For the Schouten tensor, we prove that

 \begin{theorem}
 	Let $(M,g)$ be a compact Riemannian manifold of dimension $n\geq 3$ and with smooth boundary. There is a smooth compact   metric $g_u=e^{2u}g$, $u\in C^\infty(\bar M)$ such that
 	\begin{equation}
 		\begin{aligned}
 			\lambda(-g^{-1}A_{g_u}) \in \Gamma \mbox{ in } \bar M, \nonumber
 		\end{aligned}
 	\end{equation}
 	provided that $\varrho_\Gamma>2$. 
 	
 	If, in addition, $\Gamma$ is of type 2  then the statement holds for $\varrho_\Gamma\geq 2.$
 \end{theorem}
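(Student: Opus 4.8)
The plan is to combine the conformal transformation law for the Schouten tensor with a conformal factor built from a Morse function adapted to $\Gamma$. First I would record that under $g_u=e^{2u}g$,
\begin{equation}
A_{g_u}=A_g-\nabla^2u+du\otimes du-\tfrac12|\nabla u|_g^2\,g, \nonumber
\end{equation}
so that $\lambda(-g^{-1}A_{g_u})\in\Gamma$ is equivalent to $\lambda(E_u)\in\Gamma$ at every point, where
\begin{equation}
E_u:=-A_g+\nabla^2u-du\otimes du+\tfrac12|\nabla u|_g^2\,g . \nonumber
\end{equation}
I would use throughout that, $\Gamma$ being open, symmetric, convex and containing $\Gamma_n$, the set $\widetilde\Gamma=\{T:\lambda(T)\in\Gamma\}$ is an open convex cone; hence $\lambda(T_1+T_2)\in\Gamma$ whenever $\lambda(T_1)\in\Gamma$ and $\lambda(T_2)\in\overline\Gamma$, and $\widetilde\Gamma$ is unchanged under adding nonnegative tensors.

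The decisive remark is that, wherever $\nabla u\neq0$, the ``gradient part'' $-du\otimes du+\tfrac12|\nabla u|_g^2\,g$ has eigenvalue vector $\tfrac12|\nabla u|_g^2(1,\dots,1,-1)$, which by \eqref{varrho1-Gamma} lies in $\Gamma$ exactly when $\varrho_\Gamma>2$ and in $\overline\Gamma$ when $\varrho_\Gamma\geq2$; this is precisely the hypothesis. The case $\tau=1$ is genuinely harder than $\tau\geq2$ here: for $\tau\neq1$ the transformation law carries the extra term $\tfrac{\tau-1}{n-2}\Delta u\,g$, a full-rank positive contribution when $\Delta u>0$, which one can use to ``convexify'' the Hessian, whereas at $\tau=1$ one must rely on the gradient part alone. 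Accordingly I would take $u=\psi(\varphi)$ with $\varphi$ a Morse function on $\bar M$ having no critical point on $\partial M$, and $\psi$ increasing, strictly convex, with $\psi'$ large and $\psi''\geq c\,\psi'^2$ on the bounded range of $\varphi$ for a small fixed $c>0$ (compatible with $\psi'$ large precisely because $\varrho_\Gamma\geq2$ makes every $c>0$ admissible below). Then
\begin{equation}
E_u=-A_g+\psi'\nabla^2\varphi+(\psi''-\psi'^2)\,d\varphi\otimes d\varphi+\tfrac12\psi'^2|\nabla\varphi|_g^2\,g , \nonumber
\end{equation}
and on $\{|\nabla\varphi|_g\geq\delta\}$ the last two terms dominate, with eigenvalue vector a large positive multiple of $\bigl(1,\dots,1,\tfrac{2\psi''}{\psi'^2}-1\bigr)$, which lies in $\Gamma$ when $\varrho_\Gamma\geq2$; the remaining, lower-order terms are absorbed once $\psi'$ is large, so $\lambda(E_u)\in\Gamma$ there.

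It remains to treat a neighbourhood of each critical point $p$ of $\varphi$. There $\nabla\varphi$ is small, the gradient part is negligible, and $E_u(p)=-A_g(p)+\psi'(\varphi(p))\nabla^2\varphi(p)$, so for $\psi'$ large one needs $\lambda(\nabla^2\varphi(p))\in\Gamma$. Choosing $\varphi$ near $p$ to be a normalised Morse model with $\nabla^2\varphi(p)$ of eigenvalues $(-\epsilon,\dots,-\epsilon,1,\dots,1)$, $k$ copies of $-\epsilon$ with $\epsilon>0$ as small as desired and $k$ the Morse index, convexity and symmetry of $\Gamma$ together with \eqref{varrho1-Gamma} give $(-\epsilon,\dots,-\epsilon,1,\dots,1)\in\Gamma$ for small $\epsilon$ once the index is small enough — in particular for every $k\leq2$ when $\varrho_\Gamma>2$, and for every $k\leq n-1$ when $\Gamma$ is of type $2$ (since then $(0,\dots,0,1)\in\Gamma$ and $\Gamma\supseteq\Gamma_n$). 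Thus the whole construction reduces to a purely topological statement: $\bar M$ admits a Morse function, with no critical point on $\partial M$, all of whose critical points have index in the range allowed by $\Gamma$. For a type $2$ cone this is immediate, since a compact manifold with nonempty boundary has a handle decomposition with no handle of top index $n$. For a type $1$ cone with $\varrho_\Gamma>2$ one additionally has to trade and cancel handles — using crucially that $\partial M\neq\emptyset$, so that the decomposition may be turned over and the indices pushed down below the threshold imposed by $\Gamma$ — and I expect this Morse-theoretic step to be the main obstacle, the analytic estimates above being essentially routine once the convexity of $\widetilde\Gamma$ and the sign of $\varrho_\Gamma-2$ are exploited.

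Finally I would globalise. Cover $\bar M$ by the region $\{|\nabla\varphi|_g\geq\delta\}$ and small balls about the finitely many critical points. On each such ball, rescaling the coordinates by $\psi'^{1/2}$, the tensor $E_u$ is to leading order $\psi'\bigl[\nabla^2\varphi(p)+\tfrac12|Hz|^2 g-(Hz)\otimes(Hz)\bigr]$ with $H=\nabla^2\varphi(p)$, where $\tfrac12|Hz|^2 g-(Hz)\otimes(Hz)\in\overline{\widetilde\Gamma}$ (again by $\varrho_\Gamma\geq2$) and $\nabla^2\varphi(p)\in\widetilde\Gamma$, so convexity of $\widetilde\Gamma$ yields $\lambda(E_u)\in\Gamma$ on the balls for $\psi'$ large; the intermediate annuli interpolate between the two regimes and are controlled by the same inequalities. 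A compactness argument over the finitely many critical points then gives $\lambda(E_u)\in\Gamma$ on all of $\bar M$, and $u=\psi(\varphi)\in C^\infty(\bar M)$ is the desired conformal factor.
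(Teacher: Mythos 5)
Your analytic framework (isolating the gradient part $\tfrac12|\nabla u|^2g-du\otimes du$ with eigenvalue vector $\tfrac12|\nabla u|^2(1,\dots,1,-1)$, and using convexity/openness of $\Gamma$ to absorb lower-order terms) is sound and is essentially the same computation the paper performs starting from \eqref{conformal-formula2}. But the step you yourself flag as ``the main obstacle'' is a genuine gap, and it cannot be closed along the lines you suggest. For a general type~1 cone with $\varrho_\Gamma>2$, the only Morse indices you can guarantee to be admissible are $k\leq 2$: from $(1,\dots,1,1-\varrho_\Gamma)\in\partial\Gamma$ one gets $(1,\dots,1,-1)\in\Gamma$ and, by symmetrizing two such vectors, $(1,\dots,1,0,0)\in\Gamma$, but nothing forces $(1,\dots,1,-\epsilon,-\epsilon,-\epsilon)\in\Gamma$. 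Concretely, for $\Gamma=\{\lambda:\lambda_{i_1}+\cdots+\lambda_{i_{n-2}}>0 \text{ for all } i_1<\cdots<i_{n-2}\}$ one has $\varrho_\Gamma=n-2>2$ for $n\geq 5$, yet an index-$3$ critical point gives $(-\epsilon,-\epsilon,-\epsilon,1,\dots,1)\notin\Gamma$ when $n=5$. Since a compact manifold with boundary such as $S^3\times D^{n-3}$ has $H_3\neq 0$ and therefore admits no Morse function with all indices $\leq 2$, no amount of handle trading can produce the function your argument requires. So the type~1 branch of your proof fails on such manifolds; only the type~2 branch (indices up to $n-1$, achievable because $\partial M\neq\emptyset$) could be completed as written.

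The idea you are missing is that on a compact manifold with \emph{nonempty boundary} one does not need to control Morse indices at all: there is a smooth function $v$ with $dv\neq 0$ everywhere (Lemma \ref{lemma-diff-topologuy} of the paper --- double $M$, take a Morse function on the double, and use the homogeneity lemma to isotope every critical point off $\bar M$). The paper then takes $u=e^{Nv}$ with $v\geq 1$, so that
\begin{equation}
-A_{g_u}=-A_g+Ne^{Nv}\nabla^2v+N^2e^{Nv}\,dv\otimes dv+N^2e^{2Nv}\Bigl(\tfrac12|\nabla v|^2g-dv\otimes dv\Bigr), \nonumber
\end{equation}
and the last term, whose eigenvalue vector is a positive multiple of $(1,\dots,1,-1)\in\Gamma$ when $\varrho_\Gamma>2$, dominates the others by a factor $e^{Nv}\geq e^N$ \emph{uniformly on} $\bar M$ because $|\nabla v|^2$ is bounded below; openness of $\Gamma$ finishes the proof with no case analysis. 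For type~2 cones with $\varrho_\Gamma\geq 2$ the paper instead lets the term $N^2e^{Nv}\,dv\otimes dv$ (direction $(0,\dots,0,1)\in\Gamma$) supply the open-cone contribution while the $e^{2Nv}$-term contributes only an element of $\bar\Gamma$. I would recommend replacing your Morse-index reduction by this critical-point-free function; everything else in your write-up then collapses to the single ``gradient region'' estimate you already have.
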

 
 %%%%%%%%%%%%%%%%%%%%%%%%%%%%%%%%%%%%%%%%%%%%%%%%%%%%%%%%%%%%%%%%%%%%%%%%%%%%%%%%%%%%%%%%%%%%%%%%%%%%
 
%  \begin{remark}
  	 
  	 %As shown by \cite{yuan-PUE2-note}, 
  	 %\begin{corollary}[\cite{yuan-PUE2-note}]
  	 %	\label{corollary1-example-type2}
  	 %	for  any   $1<t<n$  there is a type 2 cone ${\Gamma}$	with  $\varrho_{{\Gamma}}=t.$ So  one cannot obtain an effective estimate for  type 2 cones.
  	% \end{corollary}
 % \end{remark}

 %\begin{remark}
%	The formula \eqref{sectional-einstein} motivates 
	%Based on the relation between sectional curvature and Einstein tensor, 
%	Gursky-Streets-Warren  \cite{Gursky-Streets-Warren2010}  	to prove that any 3-manifold  with smooth boundary admits a complete metric of ``almost negative"  	curvature, via 	a Monge-Amp\`ere type equation. 
%\end{remark}

%%%%%%%%%%%%%%%%%%%%%%%%%%%%
%%%%%%%%%%%%%%%%%%%%%%%%%%%%

\medskip

\section{Preliminaries}
\label{sec2-prelimi}

\subsection{Notations and Formulas}
% \noindent{\bf Notations and some formulas}.
% On a Riemannian manifold  $(M,g)$ 

%Given a cone $\Gamma$, we denote $\Gamma_\infty$  the projection of $\Gamma$ to the subspace of former $n-1$ subscripts, that is  
 %\begin{equation}	\label{construct1-Gamma-infty}	\begin{aligned}	\Gamma_\infty:=\{(\lambda_{1}, \cdots, \lambda_{n-1}): (\lambda_{1}, \cdots, \lambda_{n-1},\lambda_n) \in \Gamma\}.	\end{aligned}\end{equation}

Let $e_1,...,e_n$ be a local frame on $M$.  %From now on %we denote  
Denote
$$  \langle X,Y\rangle=g(X,Y),\,\  g_{ij}= \langle e_i,e_j\rangle,\,\   \{g^{ij} \} =  \{g_{ij} \}^{-1}.$$
Under Levi-Civita connection $\nabla$ of $(M,g)$, $\nabla_{e_i}e_j=\Gamma_{ij}^k e_k$, and $\Gamma_{ij}^k$ denote the Christoffel symbols.  %$\Gamma_{ij}^k=\Gamma_{ji}^k$. 
For simplicity we write 
$$\nabla_i=\nabla_{e_i}, \nabla_{ij}=\nabla_i\nabla_j-\Gamma_{ij}^k\nabla_k, 
\nabla_{ijk}=\nabla_i\nabla_{jk}-\Gamma_{ij}^l\nabla_{lk}-\Gamma^l_{ik}\nabla_{jl}, \mbox{ etc}.$$

Under the conformal change
${g}_u=e^{2u}g$, one has  
(see e.g. \cite{Besse1987} or \cite{Gursky2003Viaclovsky}) 
%\begin{equation}	\label{conformal-formula1}	\begin{aligned}		A_{\tilde{g}}^{\tau,\alpha}	= A_{g}^{\tau,\alpha}		+\frac{\alpha(\tau-1)}{n-2}\Delta u g-\alpha  \nabla^2 u	+\frac{\alpha(\tau-2)}{2}|\nabla u|^2 g		+\alpha  du\otimes du. \nonumber	\end{aligned} \end{equation}
\begin{equation}
	\label{conformal-formula1}
	\begin{aligned}
		A_{\tilde{g}}^{\tau}
		= A_{g}^{\tau}
		+\frac{\tau-1}{n-2}\Delta u g- \nabla^2 u
		+\frac{ \tau-2}{2}|\nabla u|^2 g 
		+   du\otimes du. %\nonumber
	\end{aligned}
\end{equation}
In particular, the Schouten tensor obeys
\begin{equation}\label{conformal-formula2}\begin{aligned}A_{{g}_u} = A_{g} -\nabla^2 u	-\frac{1}{2}|\nabla u|^2 g+ du\otimes du. \end{aligned}\end{equation}
Throughout this paper, $\Delta u$, $\nabla^2 u$ and $\nabla u$ are the Laplacian, Hessian and gradient of $u$ with respect to $g$, respectively.
For simplicity,  we denote
\begin{equation}
	\label{beta-gamma-A2}
	\begin{aligned}
		V[u]=\Delta u g -\varrho\nabla^2 u+\gamma |\nabla u|^2 g +\varrho du\otimes du+A,   %\nonumber
	\end{aligned} 
\end{equation} 
 Throughout this paper,  we denote
\begin{equation}
	\label{beta-gamma-A-3}
	\begin{aligned}
		\varrho=\frac{n-2}{\tau-1}, \mbox{ }
		\gamma=\frac{(\tau-2)(n-2)}{2(\tau-1)}.
		% \mbox{ } %\gamma_2=\frac{n-2}{\tau-1}, \mbox{ }
		%A=\frac{n-2}{\alpha(\tau-1)} A_{g}^{\tau,\alpha}.  %\nonumber %\tilde{\psi}=\frac{(n-2)\psi}{\alpha(\tau-1)}.
	\end{aligned}
\end{equation}
%Here  $\varrho$ and $\gamma$ are given by \eqref{beta-gamma-A-3},

One can check 
 $$V[u]=\frac{n-2}{\tau-1} A^{\tau}_{{g}_u}, \mbox{ } {g}_u=e^{2u}g, \mbox{ } A=\frac{n-2}{\tau-1} A_{g}^{\tau}.$$
%Moreover, by the strightforward computation
%\begin{equation}	\label{computation1-add}	\begin{aligned}		V[{u}+w]=\,&	V[w]+\Delta  {u} g -\varrho\nabla^2  {u}+\gamma |\nabla  {u}|^2 g +\varrho d {u}\otimes d {u} \\		\,&+ 2\gamma  \langle \nabla w,\nabla {u}\rangle g+\varrho (d {u}\otimes dw+dw\otimes d {u}). % \nonumber	\end{aligned}\end{equation}

\subsection{Some result on Morse function}

The following lemma asserts that any compact manifold with boundary carries a Morse function without any critical point.    %See also  \cite{yuan-PUE1} for  detail.
%(We thank Professor Yi Liu for %generously 
% answering questions related to the proof of Lemma \ref{lemma-diff-topologuy}).

\begin{lemma}
	\label{lemma-diff-topologuy}
	Let $M$
	be a compact connected  
	manifold of dimension $n\geq 2$ with smooth boundary. Then there is a smooth function $v$ without any critical points, that is $d v\neq 0$.
\end{lemma}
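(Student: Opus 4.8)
The plan is to prove Lemma~\ref{lemma-diff-topologuy} by building the Morse function in two stages: first produce \emph{some} Morse function on $\bar M$, then cancel all of its critical points by exploiting the presence of a nonempty boundary. For the first stage, recall that Morse functions are generic: any smooth function can be perturbed (in the $C^\infty$ topology) to one whose critical points are all nondegenerate, and on a compact manifold such a function has only finitely many critical points. So start with an arbitrary Morse function $f_0\in C^\infty(\bar M)$, say a suitable height function from an embedding of $\bar M$ into Euclidean space, and list its critical points $p_1,\dots,p_N$.

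The key observation for the second stage is that because $M$ has nonempty boundary, $\bar M$ deformation retracts onto a complex of dimension $<n$ — more to the point, the interior $M$ is an open manifold and carries no "top" handle, which is exactly the flexibility that lets one slide critical points off to the boundary. Concretely, I would argue as follows. Since $M$ is connected with boundary, one can choose disjoint embedded arcs joining each critical point $p_i$ to the boundary $\partial M$, and a collar neighborhood of $\partial M$; thickening these arcs gives an embedded ball $B\subset M$ that contains all of $p_1,\dots,p_N$ in its interior and meets $\partial M$ in a disk. Modify $f_0$ inside $B$: on the complement of $B$ keep $f_0$ (which is already critical-point-free there), and inside $B$ replace it by a function with no critical points that matches $f_0$ near $\partial B$. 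This is possible because $B$ is a ball attached to the boundary, so one has enough room to interpolate without creating new critical points — intuitively, one "pushes the critical values out through the boundary." The resulting $v\in C^\infty(\bar M)$ has $dv\neq 0$ everywhere.

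An alternative, cleaner route avoids handle manipulations entirely: take a proper Morse function $h$ on the \emph{open} manifold $M$ that is exhausting, and use the fact (a standard consequence of Morse theory / the Morse--Sard theorem together with the openness of $M$) that an open connected manifold admits a submersion to $\mathbb{R}$, equivalently a nowhere-vanishing closed $1$-form that is exact; one can also simply quote that $M$ is homotopy equivalent to a CW complex of dimension $\le n-1$ and invoke the corresponding handle-cancellation result. Either way, once a nowhere-critical function on $M$ is in hand, restrict and smooth it up to the compact $\bar M$ using the collar structure of $\partial M$, controlling behavior near the boundary so that $dv\neq 0$ persists up to and including $\partial M$.

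The main obstacle is the passage from the open manifold $M$ to the compact manifold-with-boundary $\bar M$: it is easy to get a critical-point-free function on the interior, but one must ensure no critical point is created in the collar or on $\partial M$ itself when truncating the exhaustion or matching across $\partial B$. I expect the cleanest way to handle this is to do all the surgery strictly inside a ball $B$ that straddles the boundary (so the gradient flow has an "exit" through $\partial M$), keeping $f_0$ untouched — hence still regular — on $\bar M\setminus B$; verifying that the interpolation on $B$ can be done with a nowhere-vanishing differential is where the real content lies, and it rests on the elementary fact that the $n$-ball admits a function with no interior critical points and prescribed regular boundary values compatible with a nonvanishing gradient.
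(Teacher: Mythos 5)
Your overall strategy --- corral the critical points into a half-ball $B$ meeting $\partial M$ in a disk and then redefine the function on $B$ without critical points --- is a viable route, but as written the proof has a genuine gap precisely at the step you yourself flag: the claim that one can fill in $B$ with a function of nowhere-vanishing differential agreeing with $f_0$ near $\partial B\cap M$. This is not an ``elementary fact''; it is the entire content of the lemma. Note that the analogous statement for a ball contained in the interior, with data prescribed on all of $\partial B$, is simply false: for $f_0=|x|$ near the unit sphere in $\mathbb{R}^n$, the normalized gradient is the outward normal, of degree $1$ on $\partial B$, so by Poincar\'e--Hopf every extension of this field to $B$ vanishes somewhere and every such function has an interior critical point. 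So the argument must genuinely use that the data is prescribed only on the disk $\partial B\cap M$ (where the obstruction vanishes because a disk is contractible), and even then passing from ``nonvanishing vector field extending $\nabla f_0$'' to ``nonvanishing gradient of a function extending $f_0$'' requires a handle-cancellation or gradient-flow argument that you do not supply. Your alternative route has the same issue in a different guise: the existence of a submersion $M\to\mathbb{R}$ on an open manifold is Phillips' theorem, not a consequence of Morse--Sard, and one would still have to control $dv$ up to and including $\partial M$.

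For comparison, the paper avoids all local surgery on the function. It forms the double $X$ of $M$, takes any Morse function $w$ on the closed manifold $X$, and uses Milnor's homogeneity lemma to produce a diffeomorphism $h$ of $X$, isotopic to the identity, carrying every critical point of $w$ that lies in $\bar M$ to a prescribed regular point of $X\setminus\bar M$ while fixing the remaining critical points. Then $v=w\circ h^{-1}\big|_{\bar M}$ satisfies $dv\neq0$ on all of $\bar M$, since the critical set of $w\circ h^{-1}$ is the $h$-image of that of $w$ and hence lies in $X\setminus\bar M$. If you want to salvage your approach, the cleanest fix is to replace the interpolation on $B$ by exactly this kind of point-moving isotopy rather than an extension argument for the function itself.
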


\begin{proof} 
	     The construction  is more or less standard in differential topology.
	%This proof is standard in differential topology. 
	%The proof is inspired by personal communication with Professor Liu Yi.
	Let $X$ be the double of $M$. Let $w$ be a smooth Morse function on $X$ with the critical set $\{p_i\}_{i=1}^{m+k}$, among which $p_1,\cdots, p_m$ are all the critical points  being in $\bar M$. 
	% ($k\geq 0$). 
	Pick $q_1, \cdots, q_m\in X\setminus \bar M$ but not the critical point of $w$. By homogeneity lemma 
	(see \cite{Milnor-1997}), 
	one can find a diffeomorphism
	$h: X\to X$, which is smoothly isotopic to the identity, such that 
	\begin{itemize}
		\item $h(p_i)=q_i$, $1\leq  i\leq  m$.
		\item $h(p_i)=p_i$, $m+1\leq  i\leq  m+k$.
	\end{itemize}
	Then $v=w\circ h^{-1}\big|_{\bar M}$ is the desired %Morse 
	function.
	
\end{proof}

%\section{Construction of admissible metrics}

\section{Construction of  metrics}

%\subsection{Manifolds with boundary}

Let $v$ be the smooth function as we claimed in Lemma \ref{lemma-diff-topologuy}. 
Without loss of generality, we assume 
\begin{equation}
	\begin{aligned}
		v\geq 1 \mbox{ in } \bar M. \nonumber
	\end{aligned}
\end{equation}
Let us
take ${u}=e^{Nv},$ ${g}_u=e^{2{u}}g,$
then 
\begin{equation}
	\begin{aligned}
		-A_{g_u}
		%= \,& -A_g +Ne^{Nv} \nabla^2 v + N^2 e^{2Nv} (\frac{1}{2}|\nabla v|^2 g+(e^{-Nv}-1)dv\otimes dv)  \\
		=  -A_g +Ne^{Nv} \nabla^2 v 
		+N^2 e^{Nv} dv\otimes dv 
		+ 
		N^2 e^{2Nv} \left(\frac{1}{2}|\nabla v|^2 g -dv\otimes dv \right), 	\nonumber
	\end{aligned}
\end{equation}  
\begin{equation}
	\label{key3-2}
	\begin{aligned}
		V[{u}]
		%=  \,& A+ N^2e^{Nv}\left((\Delta v g-\varrho\nabla^2v)/N+(1+\gamma e^{Nv})|\nabla v|^2 g+\varrho (e^{Nv}-1)dv\otimes dv\right) \\
		=N^2e^{Nv}\left( (1+\gamma e^{Nv})|\nabla v|^2 g+\varrho (e^{Nv}-1)dv\otimes dv\right) 
		%	\\	\,&
		+Ne^{Nv} (\Delta v g-\varrho \nabla^2 v)+A 	\nonumber
	\end{aligned}
\end{equation}	
and
\begin{equation}
	\label{key3}
	\begin{aligned}
		\,&\lambda(g^{-1} ((1+\gamma e^{Nv})|\nabla v|^2 g+\varrho (e^{Nv}-1)dv\otimes dv ))\\
		=\,& |\nabla v|^2  \left[(1, \cdots, 1, 1-\varrho)+  e^{Nv}  (\gamma,\cdots,\gamma,\gamma+\varrho)\right]. 	%\nonumber
	\end{aligned}
\end{equation}

 \subsection{\bf The case $\tau\geq2$.}
 In this case $$\gamma\geq 0,  \,\, \varrho>0.$$ One easily knows that
\begin{equation}
	\begin{aligned}
	\,&	(1, \cdots, 1, 1-\varrho)+  e^{Nv}  (\gamma,\cdots,\gamma,\gamma+\varrho) \\
		=\,& (1+\gamma e^{Nv},\cdots,1+\gamma e^{Nv},1+\gamma e^{Nv}+(e^{Nv}-1)\varrho).
		\nonumber
	\end{aligned}
\end{equation}

Let us take $N\gg1$ we know that
\begin{equation}
	\begin{aligned}
		V[u]>0 \mbox{ in } \bar M. 	\nonumber
	\end{aligned}
\end{equation}

 \subsection{\bf The case $\tau=1$.} It is easy to check that 
\[\lambda\left(g^{-1}(\frac{1}{2}|\nabla v|^2 g -dv\otimes dv) \right)=|\nabla v|^2\left(\frac{1}{2},\cdots,\frac{1}{2},-\frac{1}{2} \right).\]

When $\varrho_\Gamma>2$,  
\[\lambda\left(g^{-1}(\frac{1}{2}|\nabla v|^2 g -dv\otimes dv) \right)\in\Gamma \mbox{ in } \bar M.\]
Therefore, when $N\gg1$ we see \[\lambda(-g^{-1}A_{g_u})\in\Gamma \mbox{ in } \bar M.\] 

If, in addition, $\Gamma$ is of type 2, then $(0,\cdots,0,1)\in\Gamma$. Thus
$$\lambda(g^{-1}(-A_g +Ne^{Nv} \nabla^2 v +N^2 e^{Nv} dv\otimes dv ))\in\Gamma$$ provided $N\gg1.$ Together with $\varrho_\Gamma\geq 2$, we know $\lambda(-g^{-1}A_{g_u})\in\Gamma$ if $N\gg1.$

%\bigskip
% \noindent{\bf Acknowledgements}.
%\subsubsection*{Acknowledgements} 
%The author was supported by  the National Natural Science Foundation of China
%Research partially supported by the National Natural Science Foundation of China, grant 11801587.

%\bigskip
%    Bibliographies can be prepared with BibTeX using amsplain,
%    amsalpha, or (for "historical" overviews) natbib style.
\bigskip

%\small
%\bibliographystyle{amsplain}
%    Insert the bibliography data here.

\end{document}